\theoremstyle{plain}
\newtheorem{theorem}{Theorem}[section]
\newtheorem{proposition}[theorem]{Proposition}
\newtheorem{lemma}[theorem]{Lemma}
\numberwithin{equation}{section}
\theoremstyle{definition}
\newtheorem{remark}[theorem]{Remark}
\newcommand{\cC}{\mathcal{C}}
\newcommand{\Q}{\mathbb{Q}}
\newcommand{\R}{\mathbb{R}}
\newcommand{\Z}{\mathbb{Z}}
\renewcommand{\fnum@algorithm}{\fname@algorithm}
\begin{document}

\title[Real toric varieties of types $E_7$ and $E_8$]{The Betti numbers of real toric varieties associated to Weyl chambers of types $E_7$ and $E_8$}

\author[S. Choi]{Suyoung Choi}
\address{Department of mathematics, Ajou University, 206, World cup-ro, Yeongtong-gu, Suwon 16499,  Republic of Korea}
\email{schoi@ajou.ac.kr}

\author[Y. Yoon]{Younghan Yoon}
\address{Department of mathematics, Ajou University, 206, World cup-ro, Yeongtong-gu, Suwon 16499,  Republic of Korea}
\email{younghan300@ajou.ac.kr}

\author[S. Yu]{Seonghyeon Yu}
\address{Department of mathematics, Ajou University, 206, World cup-ro, Yeongtong-gu, Suwon 16499,  Republic of Korea}
\email{yoosh0319@ajou.ac.kr}

\date{\today}
\subjclass[2020]{57S12, 14M25, 55U10, 57N65}

\thanks{The authors were supported by the National Research Foundation of Korea Grant funded by the Korean Government (NRF-2019R1A2C2010989).}

\keywords{homology group, toric topology, real toric variety, root system, Weyl group, $E_7$-type, $E_8$-type, Coxeter complex}

\begin{abstract}
We compute the rational Betti numbers of the real toric varieties associated to Weyl chambers of types $E_7$ and $E_8$, completing the computations for all types of root systems.
\end{abstract}

\maketitle

\section{Introduction}
A root system is a finite set of vectors in a finite dimensional Euclidean space that is closed under the action of a Weyl group \cite{Hall_book}.
It is known \cite{Procesi1990} that a root system of type $R$ generates a non-singular complete fan $\Sigma_R$ by its Weyl chambers and co-weight lattice and that $\Sigma_R$ corresponds to a smooth compact (complex) toric variety $X_R$ by the fundamental theorem for toric geometry.
In particular, the real locus of $X_R$ is called \emph{the real toric variety associated to the Weyl chambers}, denoted by $X^\R_R$.

It is natural to ask for the topological invariants of $X^\R_R$.
By \cite{Davis-Januszkiewicz1991}, the $\Z_2$-Betti numbers of $X^\R_R$ can be completely computed from the face numbers of $\Sigma_R$.
In general, however, computing the rational Betti numbers of a real toric variety is much more difficult.
In 2012, Henderson \cite{Henderson2012} computed the rational Betti numbers of $X^\R_{A_n}$.
The computation of other classic and exceptional types has been carried out using the formulae for rational Betti numbers developed in  \cite{ST2012} or \cite{Choi-Park2017_torsion}.
At the time of writing this paper, results have been established for $X_R^\R$ of all types except $E_7$ and $E_8$.

For the classical types $R = A_n, B_n, C_n,$ and $D_n$, the $k$th Betti numbers $\beta_k$ of $X^{\R}_{R}$ are known to be as follows (see \cite{Choi-Kaji-Park2019}, \cite{Choi-Park-Park2017}, \cite{Henderson2012}):
\begin{align*}
    \beta_k(X^{\R}_{A_n};\Q) &= {n+1 \choose 2k}a_{2k}, \\
    \beta_k(X^{\R}_{B_n};\Q) &= {n \choose 2k}b_{2k} + {n \choose 2k-1}b_{2k-1}, \\
    \beta_k(X^{\R}_{C_n};\Q) &= {n \choose 2k-2}\left(2^n -  2^{2k-2} \right)a_{2k-2} + {n \choose 2k}(2b_{2k}-2^{2k}a_{2k}), \text{ and } \\
    \beta_k(X^{\R}_{D_n};\Q) &= {n \choose 2k-4}\left(2^{2k-4} + (n-2k+2)2^{n-1} \right) a_{2k-4} + {n \choose 2k}(2b_{2k}-2^{2k}a_{2k}),
\end{align*}
where $a_r$ is the $r$th Euler zigzag number (A000111 in \cite{oeis}) and $b_r$ is the $r$th generalized Euler number (A001586 in \cite{oeis}).

For the exceptional types $R = G_2, F_4$, and $E_6$, the Betti numbers of $X^{\R}_{R}$ are as in Table~\ref{EFG} (see \cite[Proposition~3.3]{Cho-Choi-Kaji2019}).
    \begin{table}[h]
    \centering
    \begin{tabular}{c|c|c|c}
     \hline
     $\beta_{k}(X^{\R}_{R})$ & $R = G_2$ & $R = F_4$ & $R = E_6$ \\
     \hline
     $k = 0$  & $1$ & $1$ & $1$  \\
     \hline
     $k = 1$  & $9$ & $57$ & $36$ \\
     \hline
     $k = 2$ & $0$ & $264$ & $1{,}323$ \\
     \hline
     $k = 3$ & $0$ & $0$ & $4{,}392$ \\
     \hline
    \end{tabular}\\
    \caption{Nonzero Betti numbers of $X^{\R}_{G_2}$, $X^{\R}_{F_4}$, and $X^{\R}_{E_6}$}
    \label{EFG}
    \end{table}

The purpose of this paper is to compute the Betti numbers for the remaining exceptional types $E_7$ and $E_8$.
The reason why these cases have so far remained unsolved is that, as remarked in \cite{Cho-Choi-Kaji2019}, the corresponding fans are too large to be dealt with.
We provide a technical method to decompose the Coxeter complex; using this method, we obtain explicit subcomplexes $K_S$ that play an important role in our main computation.
Furthermore, we obtain a smaller simplicial complex by removing vertices in $K_S$ without changing its homology groups, so that the Betti numbers can be computed.

\begin{theorem}
    The $k$th Betti numbers $\beta_{k}$ of $X^{\R}_{E_7}$ and $X^{\R}_{E_8}$ are as follows.

     $$\beta_{k}(X^{\R}_{E_7};\Q)= \begin{cases}
    	1, & \mbox{if} \ k=0\\
    	63, & \mbox{if} \ k=1\\
    	8{,}127, & \mbox{if} \ k=2\\
    	131{,}041, & \mbox{if} \ k=3\\
    	122{,}976, & \mbox{if} \ k=4\\
    	0, & \mbox{otherwise.}
    \end{cases}$$\\

$$\beta_{k}(X^{\R}_{E_8};\Q)= \begin{cases}
	1, & \mbox{if} \ k=0\\
	120, & \mbox{if} \ k=1\\
	103{,}815, & \mbox{if} \ k=2\\
	6{,}925{,}200, & \mbox{if} \ k=3\\
	23{,}932{,}800, & \mbox{if} \ k=4\\
	0, & \mbox{otherwise.}
\end{cases} $$
\end{theorem}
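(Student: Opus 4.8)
The plan is to compute each Betti number through the combinatorial description of the rational cohomology of a real toric variety due to Suciu--Trevisan \cite{ST2012} and Choi--Park \cite{Choi-Park2017_torsion}, which expresses it as a sum of reduced cohomologies of induced subcomplexes. Write $K$ for the Coxeter complex of type $R\in\{E_7,E_8\}$, a triangulation of $S^{n-1}$ (with $n=7,8$) whose facets are the Weyl chambers, and let $\Lambda\in M_{n\times m}(\Z_2)$ be the mod-$2$ reduction of the characteristic matrix supplied by the co-weight lattice, with columns $\Lambda_v$ indexed by the vertices $v$ of $K$. For each $\omega\in\Z_2^n$ put $S_\omega=\{\,v\in V(K):\omega\cdot\Lambda_v=1\,\}$ and let $K_{S_\omega}$ denote the full subcomplex induced on $S_\omega$. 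The formula then reads
\begin{equation*}
\beta_k(X^\R_R;\Q)=\sum_{\omega\in\Z_2^n}\dim_\Q\tilde H^{k-1}(K_{S_\omega};\Q),
\end{equation*}
so the entire problem is reduced to identifying the induced subcomplexes $K_S:=K_{S_\omega}$ and computing their reduced rational homology.

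First I would cut down the $2^n$ summands. The Weyl group $W$ acts simplicially on $K$ and compatibly on the characteristic data, inducing an action on the index set $\{S_\omega\}$ under which $\dim_\Q\tilde H^\ast(K_{S_\omega};\Q)$ is invariant. I would therefore enumerate the $W$-orbits of the $S_\omega$, record the size $m_{[S]}$ of each orbit as a multiplicity, and select one representative $S$ per orbit. This is precisely the decomposition of the Coxeter complex advertised in the introduction: it replaces $128$ (respectively $256$) homology computations by a short list of representative subcomplexes $K_S$ together with integer multiplicities $m_{[S]}$.

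Second, each representative $K_S$ is still far too large to hand to a homology routine, since it inherits the facet count of the Coxeter complex. I would shrink it by homology-preserving vertex deletions: a vertex $v$ can be removed without changing the homotopy type whenever $\Lk(v)$ is contractible, in particular a cone, e.g. when $v$ is dominated by some vertex $u$ in the sense that every facet containing $v$ also contains $u$. Iterating such elementary folds yields a much smaller complex $K_S'$ with $\tilde H^\ast(K_S';\Q)\cong\tilde H^\ast(K_S;\Q)$, now small enough for its Betti numbers to be computed (by hand in the low strata and by machine for the rest). Assembling $\beta_k=\sum_{[S]}m_{[S]}\dim_\Q\tilde H^{k-1}(K_S';\Q)$ over the orbit representatives produces the stated tables; as a consistency check, $\beta_0=1$ records connectedness, and the alternating sum vanishes for the odd-dimensional manifold $X^\R_{E_7}$, as forced by Poincar\'e duality.

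The main obstacle I expect is this second step. Controlling the induced subcomplexes $K_S$ against the enormous facet count of the Coxeter complex ($|W(E_7)|=2{,}903{,}040$ and $|W(E_8)|=696{,}729{,}600$) is exactly the difficulty flagged in \cite{Cho-Choi-Kaji2019}, and one must guarantee both that the deletion scheme terminates in a genuinely tractable complex and that it provably preserves rational homology, rather than merely the $\Z_2$-Betti numbers already accessible via Davis--Januszkiewicz \cite{Davis-Januszkiewicz1991}. The orbit reduction of the first step is what makes this feasible at all, and verifying the folds (or an equivalent discrete Morse matching) on the representatives is where the real work lies.
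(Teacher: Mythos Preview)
Your strategic outline matches the paper's: reduce via $W$-orbits on $Row(\Lambda_R)$ (three orbits of sizes $63,63,1$ for $E_7$ and two of sizes $120,135$ for $E_8$, as in Proposition~\ref{E7E8orbit}), then shrink each representative $K_S$ and compute by machine. But what you correctly flag as ``the main obstacle'' is precisely the content of the paper, and the tools you propose are not quite the ones that make it go through.

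First, you do not address how even to \emph{build} $K_S$. With up to $7\times 10^8$ chambers one cannot list facets and restrict; the paper decomposes $K_R$ into blocks $K^g=g\cdot K_\omega$ indexed by cosets $W_R/H_\omega$ of an isotropy subgroup (Lemma~\ref{cosetdecom}), and then avoids recomputing most blocks via the transport criterion $g\cdot V^h_S=V^{gh}_S\Rightarrow g\cdot K^h_S=K^{gh}_S$ of Lemma~\ref{criterion}.

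Second, your deletion scheme is simultaneously too restrictive and too loosely organised. Too restrictive: you ask for $\Lk(v)$ contractible (in particular, $v$ dominated), preserving homotopy type; the paper only needs a homology isomorphism, and via Mayer--Vietoris deletes any $v$ with $\tilde H_\ast(\Lk(v))=0$, a genuinely weaker hypothesis that is what actually fires often enough. Too loosely organised: iterating folds one vertex at a time gives no guarantee that a later deletion does not destroy an earlier one's hypothesis. The paper's key structural observation is that vertices in a single $W$-orbit $V_i$ are pairwise non-adjacent in $K_R$ (each chamber meets each orbit exactly once), so all removable vertices lying in one $V_i$ may be deleted \emph{simultaneously} without interference; sweeping through the $V_i$ yields the algorithm of Theorem~\ref{thm} and the dramatically smaller $\widehat K_S$ of Table~\ref{number of vtx}.

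Finally, the paper exploits coincidences your plan does not anticipate (Proposition~\ref{pro3}): the two components of $K_{S_1}$ are isomorphic and each matches an induced subcomplex of $K_{D_6}$ (already computed in \cite{Choi-Kaji-Park2019}), and each component of $\widehat K_{S_4}$ is isomorphic to $\widehat K_{S_3}$, so only $S_2,S_3,S_5$ require direct machine computation. Even then, for $S_5$ a further partial reduction---removing one more orbit while preserving only $\tilde H_1$---is needed before the computation terminates.
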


\section{Real toric varieties associated to the Weyl chambers} \label{sec:Real toric varieties associated to the Weyl chambers}

We recall some known facts about the real toric varieties associated to the Weyl chambers, following the notation in \cite{Cho-Choi-Kaji2019} unless otherwise specified.

Let $\Phi_R$ be an irreducible root system of type $R$ in a finite dimensional Euclidean space $E$ and $W_R$ its Weyl group.
Then the reflections, namely the elements of $W_R$, give connected components in $E$, called the \emph{Weyl chambers}.
We fix a particular Weyl chamber, called the \emph{fundamental Weyl chamber} $\Omega$; its rays $\omega_1, \ldots ,\omega_n$ are called \emph{the fundamental co-weights}.
Then, $\Z(\{\omega_1,\ldots,\omega_n\})$ has a lattice structure and is called the co-weight lattice.
Consider the set of Weyl chambers as a nonsingular complete fan $\Sigma_R$ with the co-weight lattice.
From the set $V=\{v_1, \ldots,v_m\}$ of rays spanning $\Sigma_R$ we obtain the simplicial complex $K_R$, called \emph{the Coxeter complex} of type $R$ on $V$, whose faces in $K_R$ are obtained via the corresponding faces in $\Sigma_R$ (see \cite{Bjorner1984} for more details).
The directions of rays on the co-weight lattice give a linear map $\lambda_R \colon V \to \Z^n$.
In addition, the composition map $\Lambda_R \colon V \overset{\lambda_R}{\to} \Z^n \overset{\text{mod}~2}{\longrightarrow} \Z_2^n$ can be expressed as an $n \times m$ (mod $2$) matrix, called a (mod $2$) \emph{characteristic matrix}.
Let $S$ be an element of the row space $Row(\Lambda_R)$ of $\Lambda_R$.
Since each column of $\Lambda_R$ corresponds to a vertex $v \in V$, $S$ can be regarded as a subset of $V$.
Let us consider the induced subcomplex $K_S$ of $K_R$ with respect to $S$.
It is known that the reduced Betti numbers of $K_{S}$ deeply correspond to the Betti numbers of $X^\R_R$.

\begin{theorem}\cite{Cho-Choi-Kaji2019}\label{geomain}
For any root system $\Phi_{R}$ of type $R$, let $W_{R}$ be the Weyl group of $\Phi_{R}$. Then, there is a $W_{R}$-module isomorphism
$$H_{\ast}(X_{R}^{\R}) \cong \bigoplus_{S\in Row(\Lambda_{R})} \widetilde{H}_{\ast-1}(K_{S}),$$
where $K_{S}$ is the induced subcomplex of $K_{R}$ with respect to $S$.
\end{theorem}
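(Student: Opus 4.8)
The plan is to deduce the isomorphism from the general homology formula for real toric varieties of \cite{ST2012, Choi-Park2017_torsion}, which one proves by realizing $X^{\R}_R$ as a quotient of a real moment-angle complex and combining an equivariant splitting with a transfer argument. Write $V=\{v_1,\dots,v_m\}$, and let $\R\mathcal{Z}_{K_R}\subseteq (D^1)^m$ denote the real moment-angle complex of $K_R$, i.e.\ the polyhedral product $(D^1,S^0)^{K_R}$. It carries a coordinatewise action of $\Z_2^m$, and the nonsingularity of $\Sigma_R$ guarantees that the subgroup $N:=\Ker(\Lambda_R)\subseteq\Z_2^m$ acts freely. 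The first step is to identify $X^{\R}_R\cong \R\mathcal{Z}_{K_R}/N$, the residual $\Z_2^n=\Z_2^m/N$-action recovering the usual real-torus action; for the real locus of a smooth complete toric variety this quotient description is standard.

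The second ingredient is the real Gitler--López/BBCG splitting, a natural isomorphism $H_n(\R\mathcal{Z}_{K_R};\Q)\cong\bigoplus_{J\subseteq V}\widetilde{H}_{n-1}((K_R)_J;\Q)$, where $(K_R)_J$ is the full subcomplex on $J$ (with the convention $\widetilde{H}_{-1}(\{\emptyset\})=\Q$ for $J=\emptyset$, accounting for connectivity). The key refinement I would use is that this splitting is equivariant for the $\Z_2^m$-action: the summand indexed by $J$ is precisely the isotypic component for the character $\chi_J\colon\Z_2^m\to\{\pm1\}$, $\chi_J(\epsilon)=(-1)^{\sum_{j\in J}\epsilon_j}$. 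Granting this, the homology of $\R\mathcal{Z}_{K_R}$ is decomposed by the characters of $\Z_2^m$, which are indexed exactly by the subsets $J\subseteq V$.

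Now I would combine the two inputs. Since $N$ acts freely and $\Q$ has characteristic $0$, the covering projection induces an isomorphism $H_*(X^{\R}_R;\Q)\cong H_*(\R\mathcal{Z}_{K_R};\Q)^N$ onto the $N$-invariants (transfer). Under the isotypic decomposition the $N$-invariant part is the sum of those summands for which $\chi_J|_N$ is trivial. But $\chi_J|_N=1$ means $\langle J,x\rangle=0$ in $\Z_2$ for every $x\in\Ker\Lambda_R$, i.e.\ $J\in(\Ker\Lambda_R)^{\perp}$; over the field $\Z_2$ this orthogonal complement is exactly $Row(\Lambda_R)$. Reindexing $J=S$ and using the degree shift from the splitting yields $H_*(X^{\R}_R;\Q)\cong\bigoplus_{S\in Row(\Lambda_R)}\widetilde{H}_{*-1}(K_S;\Q)$, as claimed.

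It remains to upgrade this vector-space isomorphism to one of $W_R$-modules, and this is where I expect the main bookkeeping to lie. The group $W_R$ acts on $\Sigma_R$, hence simplicially on $K_R$, permuting the rays $V$; this induces compatible actions on $\R\mathcal{Z}_{K_R}$ and on $X^{\R}_R$ that normalize the $\Z_2^m$-action. Because the Gitler--López splitting is natural in the simplicial complex, it is equivariant for the symmetric group $\mathrm{Sym}(V)$ and thus for $W_R\subseteq\mathrm{Sym}(V)$: concretely $w\in W_R$ carries the $J$-summand $\widetilde{H}_{*-1}((K_R)_J)$ isomorphically to the $wJ$-summand, and $w$ preserves $Row(\Lambda_R)$ since it acts on the row space. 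The one genuinely delicate point is to verify that the transfer isomorphism and the identification of isotypic components are \emph{simultaneously} $W_R$-equivariant---that averaging over $N$ commutes with the $W_R$-action, which holds because $W_R$ normalizes $N$---so that the surviving summands assemble into a $W_R$-module isomorphism rather than merely a $W_R$-equivariant bijection of summands.
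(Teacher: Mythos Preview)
The paper does not prove Theorem~\ref{geomain} at all: it is quoted verbatim as a result of \cite{Cho-Choi-Kaji2019}, so there is no ``paper's own proof'' to compare against. Your proposal is therefore not competing with anything in the present paper; it is a sketch of the argument behind the cited result.

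That said, your outline is essentially the standard route taken in \cite{ST2012} and \cite{Choi-Park2017_torsion} for the underlying additive isomorphism, together with the equivariance refinement that is the point of \cite{Cho-Choi-Kaji2019}. The steps---modeling $X^{\R}_R$ as $\R\mathcal{Z}_{K_R}/\Ker(\Lambda_R)$, invoking the real BBCG/Gitler--L\'opez splitting indexed by characters of $\Z_2^m$, applying transfer in characteristic~$0$ to pass to $N$-invariants, and identifying the surviving index set with $Row(\Lambda_R)$ via $(\Ker\Lambda_R)^{\perp}=Row(\Lambda_R)$---are correct. Your treatment of the $W_R$-module structure (naturality of the splitting under simplicial automorphisms, $W_R$ normalizing $N$, hence compatibility with transfer) is exactly the extra bookkeeping that \cite{Cho-Choi-Kaji2019} carries out. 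One small caution: you phrase everything over $\Q$, whereas the theorem as stated here is with unspecified (integral) coefficients; the transfer step as you wrote it genuinely needs $|N|$ invertible, so if you intend the full statement you should instead appeal to the stable splitting/cellular argument of \cite{Choi-Park2017_torsion}, which works integrally.
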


Since, by Theorem~\ref{geomain}, $K_S \cong K_{gS}$ for $S \in Row(\Lambda_R)$ and $g\in W_R$, we need only investigate representatives $K_S$ of the $W_R$-orbits in $Row(\Lambda_R)$.

\begin{proposition}\cite{Cho-Choi-Kaji2019}\label{E7E8orbit}
For type $E_7$, there are $127$ nonzero elements in Row($\Lambda_{E_7}$).
In addition, there are exactly three orbits (whose representatives are denoted by $S_1,S_2$, and $S_3$), and the numbers of elements for each orbit are $63, 63,$ and $1$, respectively.

For type $E_8$, there are $255$ nonzero elements in Row($\Lambda_{E_8}$). There are only two orbits (whose representatives are denoted by $S_4$ and $S_5$), and the numbers of elements for each orbits are $120$ and $135$, respectively.
\end{proposition}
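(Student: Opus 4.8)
The plan is to give a conceptual, lattice-theoretic description of $Row(\Lambda_R)$ together with its $W_R$-action, reducing the orbit count to classical facts about quadratic and symplectic geometry over $\Z_2$. First I would record that $\Lambda_R$ has full rank over $\Z_2$: since $\Sigma_R$ is nonsingular, the $n$ ray generators of any maximal cone form a $\Z$-basis of the co-weight lattice $N$, hence a basis modulo $2$, so $\rank_{\Z_2}\Lambda_R = n$ and $|Row(\Lambda_R)| = 2^n$. This already yields the counts $2^7-1 = 127$ and $2^8-1 = 255$ of nonzero elements. To identify $Row(\Lambda_R)$ as a $W_R$-module, note that because the columns $\lambda_R(v)$ span $N/2N$, the transpose map gives a $W_R$-equivariant isomorphism $Row(\Lambda_R) \cong \mathrm{Hom}(N/2N,\Z_2) \cong \mathrm{Hom}(N,\Z)\otimes\Z_2$. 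As $N$ is the co-weight lattice, its dual is the root lattice $Q$, so $Row(\Lambda_R) \cong Q/2Q$ with $W_R$ acting by the mod-$2$ reduction of the reflection representation; under this identification the subset of $V$ attached to $S$ is $\{v : \langle S,\lambda_R(v)\rangle = 1\}$, matching the action in Theorem~\ref{geomain}. Since $E_7$ and $E_8$ are simply-laced, $Q$ is even, so $q(x) := \tfrac12(x,x) \bmod 2$ is a well-defined $W_R$-invariant quadratic form on $Q/2Q$ whose polar form $b$ is the reduction of the inner product.

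For $E_8$, I would use that $Q$ is the unimodular $E_8$-lattice, so $b$ is nondegenerate and $q$ has plus type on $Q/2Q \cong \Z_2^8$. Counting singular and nonsingular vectors for an $\mathrm{O}^+_8$-form gives $2^7 + 2^3 = 136$ vectors with $q=0$, hence $135$ nonzero, and $256 - 136 = 120$ with $q=1$. To see these are exactly the $W_{E_8}$-orbits I would invoke the exceptional isomorphism $W_{E_8}/\{\pm 1\} \cong \mathrm{O}^+_8(2)$ (the factor $-1$ acting trivially on $Q/2Q$) together with Witt's theorem, which makes $\mathrm{O}^+_8(2)$ transitive on nonzero singular vectors and on nonsingular vectors; this gives precisely the two orbits of sizes $135$ and $120$. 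As a consistency check, the $240$ roots, each with $q=1$ and identified in antipodal pairs, fill out the $120$-orbit.

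For $E_7$ the form $b$ degenerates: since $[P:Q]=2$ one has $2P \subseteq Q$, and $\mathrm{rad}(b) = (Q\cap 2P)/2Q = 2P/2Q \cong P/Q \cong \Z_2$ is one-dimensional, spanned by a vector $r$ (the reduction of $2\varpi$ for a minuscule weight $\varpi$). As $W_{E_7}$ preserves $b$ it fixes the unique nonzero radical vector $r$, the orbit of size $1$; a short computation with $(\varpi,\varpi)=\tfrac32$ gives $q(r)=2(\varpi,\varpi)\equiv 1$. Writing $V' = (Q/2Q)/\langle r\rangle$, a $6$-dimensional nondegenerate symplectic space, each nonzero $\bar v\in V'$ has preimage a pair $\{v,v+r\}$ on which $q$ takes the two distinct values $q(v)$ and $q(v)+1$; hence exactly $63$ nonzero vectors have $q=0$ and exactly $63$ vectors other than $r$ have $q=1$, partitioning the remaining $126$. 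Transitivity on each set of $63$ then follows from the exceptional isomorphism $W_{E_7}/\{\pm1\} \cong \mathrm{Sp}_6(2)$ acting transitively on $V'\setminus\{0\}$, lifted through the fibers using that $W_{E_7}$ preserves $q$, yielding the orbit sizes $63,63,1$.

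The main obstacle is the transitivity step: proving that $W_R$ realizes the full orthogonal (resp. symplectic) group over $\Z_2$ so that Witt's theorem applies. I would either cite the classical exceptional isomorphisms $W_{E_8}/\{\pm1\}\cong \mathrm{O}^+_8(2)$ and $W_{E_7}/\{\pm1\}\cong \mathrm{Sp}_6(2)$, or, in the spirit of the rest of the paper, verify the orbit decomposition directly by computer: enumerate the $2^n-1$ nonzero vectors of $Row(\Lambda_R)$ from the explicit characteristic matrix and compute their $W_R$-orbits, which simultaneously produces the representatives $S_1,\dots,S_5$.
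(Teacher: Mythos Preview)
The paper does not prove this proposition; it is quoted verbatim from \cite{Cho-Choi-Kaji2019}, so there is no in-paper argument to compare against. Your approach is correct and is a genuinely different, conceptual route rather than a citation or a brute-force orbit enumeration: identifying $Row(\Lambda_R)$ $W_R$-equivariantly with $Q/2Q$ and using the $W$-invariant quadratic form $q(x)=\tfrac12(x,x)\bmod 2$ explains the orbit sizes \emph{a priori}. For $E_8$ the form is nondegenerate of plus type, the singular/nonsingular count $135/120$ together with $W_{E_8}/\{\pm1\}\cong \mathrm{O}^+_8(2)$ and Witt's theorem gives exactly two orbits; for $E_7$ the one-dimensional radical $\langle r\rangle$ with $q(r)=1$ supplies the fixed point, and the splitting of each fiber over $V'=(Q/2Q)/\langle r\rangle$ by $q$-value, combined with $W_{E_7}/\{\pm1\}\cong\mathrm{Sp}_6(2)$ acting transitively on $V'\setminus\{0\}$, yields the two orbits of size $63$. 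The only step that is not self-contained is the pair of exceptional isomorphisms, which are classical but need an outside reference; as you note, this can alternatively be replaced by the direct computer check that the paper's source presumably carried out. What your route buys over a bare citation is a structural explanation of why the numbers are $63,63,1$ and $120,135$.
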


Thus, for our purpose, it is enough to compute the (reduced) Betti numbers of $K_{S_i}$ for $1 \leq i \leq 5$.
For practical reasons such as memory errors and large time complexity, it is not easy to obtain $K_S$ directly by computer programs.
The remainder of this section is devoted to introducing an effective way to obtain $K_{S}$.

For a fixed fundamental co-weight $\omega$, let $H_\omega$ be the isotropy subgroup of $W_R$ to $\omega$.

\begin{lemma}\label{cosetdecom}
	For type $R$, let $K_\omega$ be a subcomplex of $K_R$ induced by the set $\{g \cdot \Omega \mid g \in H_\omega \}$, where $\Omega$ is the fundamental Weyl chamber.
	Then there is a decomposition of the Coxeter complex $K_R$ as follows:
	$$K_R = \bigsqcup_{g \in {W_R}/{H_\omega}}  {K^g},$$ where $K^g = g \cdot K_\omega$.
\end{lemma}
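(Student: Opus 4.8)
The plan is to deduce the decomposition entirely from two standard facts: that $W_R$ acts simply transitively on the set of Weyl chambers, and that the cosets of a subgroup partition the group. Recall that the maximal simplices of the Coxeter complex $K_R$ are exactly the chambers, and the assignment $g \mapsto g\cdot\Omega$ is a bijection from $W_R$ onto this set of maximal faces. By definition $K_\omega$ is the subcomplex generated by the chambers $\{h\cdot\Omega \mid h\in H_\omega\}$, that is, by the chambers indexed by the subgroup $H_\omega$ itself.

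First I would check that $K^g := g\cdot K_\omega$ depends only on the coset $gH_\omega$, so that the indexing set $W_R/H_\omega$ makes sense. This is because $H_\omega$ stabilizes $K_\omega$: for $h\in H_\omega$, left multiplication permutes $H_\omega$, so $h\cdot\{h'\Omega \mid h'\in H_\omega\} = \{h'\Omega \mid h'\in H_\omega\}$ and hence $h\cdot K_\omega = K_\omega$; therefore $gh\cdot K_\omega = g\cdot K_\omega$ for every $h\in H_\omega$. In particular the maximal faces of $K^g$ are precisely the chambers $\{gh\cdot\Omega \mid h\in H_\omega\}$, i.e. the chambers indexed by the coset $gH_\omega$.

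Next I would assemble the global statement. The coset partition $W_R = \bigsqcup_{gH_\omega\in W_R/H_\omega} gH_\omega$ transports, under the bijection $g\mapsto g\cdot\Omega$, to a partition of the chambers of $K_R$ into the families $\{gh\cdot\Omega \mid h\in H_\omega\}$, one family per coset, and each such family is exactly the set of maximal faces of the corresponding $K^g$. Since the Coxeter complex is pure of dimension $n-1$---every simplex is a face of some chamber---the subcomplexes $K^g$ cover $K_R$, and each maximal face lies in one and only one $K^g$. This is the asserted decomposition.

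The point needing care is the exact meaning of $\bigsqcup$. Two pieces $K^g$ and $K^{g'}$ with $gH_\omega\neq g'H_\omega$ have disjoint sets of maximal faces, but they are not disjoint as simplicial complexes: chambers in different cosets can be adjacent and then share a lower-dimensional face. Thus I would stress that the decomposition is a partition of the top-dimensional simplices rather than a disjoint union of spaces. Geometrically this is transparent once one observes that $K_\omega$ is the closed star of the vertex $\omega$, so that $K^g = g\cdot K_\omega = \St(g\cdot\omega)$; the decomposition then says that the closed stars of the vertices in the orbit $W_R\cdot\omega$ cover $K_R$ with pairwise-disjoint chamber sets, the only genuine input being the simple transitivity of the $W_R$-action on chambers.
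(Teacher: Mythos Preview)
Your argument is correct and matches the paper's proof: both rest on the simple transitivity of $W_R$ on chambers together with the coset partition $W_R=\bigsqcup gH_\omega$ to obtain a partition of the maximal simplices. Your version is more detailed---you check well-definedness of $K^g$ on cosets, identify $K_\omega$ with $\St(\omega)$, and are explicit that ``disjoint'' means disjoint at the level of chambers---but the underlying approach is the same.
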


\begin{proof}
	For any maximal simplex $\sigma \in K_R$, there exists a unique $h \in W_R$ such that $h \cdot \Omega = \sigma$ by Propositions~8.23 and 8.27 in \cite{Hall_book}\label{free}.
    It follows that $h$ is uniquely contained in $g \cdot H_\omega$ for some $g \in W_R$. Thus, $\sigma = h \cdot \Omega$ is a maximal simplex of $K^g$, and all $K^g$s are pairwise disjoint.
\end{proof}

By the above lemma, $K_S$ is also decomposed into $K^g_S := K^g \cap K_S$ for all coset representations $g \in {W_R}/{H_\omega}$.
The set of all maximal simplices of $K_S$ is then obtainable as the union of the sets of all maximal simplices of $K^g_S$ for all $g \in {W_R}/{H_\omega}$.
However, for types $E_7$ and $E_8$, since $K^g$ still has many facets, it is not easy to obtain $K^g_S$ from $K^g$ directly; see Table~\ref{K_R}.

\begin{table}[H]
	\renewcommand{\arraystretch}{1.3}
	\centering
	\begin{tabular}{c|c|c}
		\hline
		 & $R=E_7$ & $R=E_8$ \\
		\hline
		$\#$ vertices of $K_R$ & 17{,}642 & 881{,}760 \\
		\hline
		$\#$ chambers of $K_R$ & 2{,}903{,}040 & 696{,}729{,}600 \\
		\hline
		$| W_R /H_\omega|$ & 126 & 240 \\
		\hline
		$\#$ chambers of $K^g$ & 23{,}040 & 2{,}903{,}040 \\
		\hline
	\end{tabular}\\
	\caption{Statistics for $K_R$ when $R=E_7$ and $E_8$}
	\label{K_R}
\end{table}

We establish a lemma to improve the time complexity.
Denote by $V^g_S$ the set of vertices in $K^g_S$.
\begin{lemma}\label{criterion}
    Let $g,h \in W_{R}/{H_\omega}$.
    If $g \cdot V^{h}_S = V^{gh}_S$, then $g \cdot K^{h}_S = K^{gh}_S$.
\end{lemma}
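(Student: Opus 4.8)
The plan is to reduce the statement to the elementary fact that a simplicial automorphism of $K_R$ carries induced (full) subcomplexes to induced subcomplexes, and that an induced subcomplex is determined entirely by its set of vertices. Throughout I regard $g,h$ as representatives in $W_R$, so that each acts as a simplicial automorphism of $K_R$; all the relevant objects $K^h = h\cdot K_\omega$, $K^h_S$, and $V^h_S$ depend only on the appropriate coset. Writing $V(L)$ for the vertex set of a subcomplex $L$, the first observation I would record is that $K^h_S = K^h \cap K_S$ is exactly the subcomplex of $K^h$ induced on $S$: since $K^h$ is a subcomplex of $K_R$ and $K_S$ consists of all faces $\sigma$ of $K_R$ with $\sigma \subseteq S$, a face lies in $K^h_S$ if and only if it lies in $K^h$ and has all its vertices in $S$. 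Consequently the vertex set of $K^h_S$ is $V^h_S = V(K^h)\cap S$, and $K^h_S$ is equally the subcomplex of $K^h$ induced on $V^h_S$.

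Next I would compute $g\cdot K^h_S$ directly. Because $g$ is a simplicial automorphism of $K_R$, it commutes with intersection, so
\[
g\cdot K^h_S = g\cdot\bigl(K^h\cap K_S\bigr) = (g\cdot K^h)\cap(g\cdot K_S) = K^{gh}\cap K_{gS},
\]
using $g\cdot K^h = gh\cdot K_\omega = K^{gh}$ together with $g\cdot K_S = K_{gS}$ (the image of a full subcomplex under an automorphism is again the full subcomplex on the image vertex set). By the same reasoning as in the first step, $K^{gh}\cap K_{gS}$ is the subcomplex of $K^{gh}$ induced on the vertex set $gS$, whose actual vertices are $V(K^{gh})\cap gS = g\bigl(V(K^h)\cap S\bigr) = g\cdot V^h_S$.

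Finally I would invoke the hypothesis. An induced subcomplex of $K^{gh}$ depends only on the intersection of the prescribed vertex set with $V(K^{gh})$; thus the subcomplex of $K^{gh}$ induced on $gS$ equals the subcomplex induced on $g\cdot V^h_S$. The hypothesis $g\cdot V^h_S = V^{gh}_S$ then lets me replace this by the subcomplex of $K^{gh}$ induced on $V^{gh}_S = V(K^{gh})\cap S$, which is precisely $K^{gh}\cap K_S = K^{gh}_S$. Chaining these identifications yields $g\cdot K^h_S = K^{gh}_S$, as desired.

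The argument is essentially formal, so there is no serious obstacle; the one point that requires care — and the reason the hypothesis is phrased at the level of vertices — is that a priori $g\cdot K^h_S$ is the subcomplex of $K^{gh}$ induced on $gS$ rather than on $S$, so these two induced subcomplexes need not coincide for a general vertex subset. The content of the lemma is exactly that equality of the relevant vertex sets, $g\cdot V^h_S = V^{gh}_S$, suffices to force equality of the full induced subcomplexes. I would therefore make sure to state explicitly that an induced subcomplex is unchanged when its defining vertex set is intersected with the ambient vertex set, since this is what promotes the vertex-level hypothesis to the complex-level conclusion. (The same chain of equalities is in fact reversible, giving the converse as well, but only the stated direction is needed for our purposes.)
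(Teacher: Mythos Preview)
Your proof is correct and follows essentially the same approach as the paper's: both rely on the fact that $g$ acts as a simplicial isomorphism $K^h \to K^{gh}$ and that an induced subcomplex is determined by its vertex set, so the vertex-level hypothesis $g\cdot V^h_S = V^{gh}_S$ immediately forces $g\cdot K^h_S = K^{gh}_S$. The paper's version is terser (it simply observes that the restriction of $g$ to $K^h_S$ lands in $K^{gh}_S$), while you spell out the intermediate identification $g\cdot K^h_S = K^{gh}\cap K_{gS}$ explicitly, but the underlying argument is the same.
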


\begin{proof}
    For $g \in {W_R}/{H_\omega}$, we naturally consider $g$ as a simplicial isomorphism from $K^h$ to $K^{gh}$.
    If $g\cdot V^h_S = V^{gh}_S$, then the restriction of $g$ to $K^h_S$ is well-defined. Thus, $g$ is also regarded as a simplicial isomorphism between $K^h_S$ and $K^{gh}_S$.
\end{proof}

By the above lemma, in the case when $g\cdot V^{h}_S = V^{gh}_S$, $K^{gh}_S$ is obtainable without any computation.
Since checking the hypothesis of the lemma is much easier than forming $K^g_S$ from $K^g$, a good deal of time can be saved.
Using this method, one can obtain $K_S$ within a reasonable time with standard computer hardware.

\section{Simplicial complexes for types $E_7$ and $E_8$}

Since each $K_S$ for the types $E_7$ or $E_8$ is too large for direct computation, it is impossible using existing methods to compute their Betti numbers directly.
In this section, we introduce the specific smaller simplicial complex $\widehat{K}_S$ whose homology group is isomorphic as a group to that of $K_S$.

Let $K$ be a simplicial complex.
The \emph{link} $Lk_{K}(v)$ of $v$ in $K$ is a set of all faces $\sigma \in K$ such that $v \notin \sigma$ and $\{v\} \cup \sigma \in K$, while the (closed) \emph{star} $St_{K}(v)$ of $v$ in $K$ is a set of all faces $\sigma \in K$ such that $\{v\} \cup \sigma \in K$.
For a vertex $v$ of $K_S$ satisfying $Lk_{K}(v) \neq \emptyset$, we consider the following Mayer-Vietoris sequence:
$$
    \cdots \rightarrow \widetilde{H}_{k}(Lk_{K}(v)) \rightarrow \widetilde{H}_{k}(K-v)\oplus \widetilde{H}_{k}(St_{K}(v)) \rightarrow \widetilde{H}_{k}(K) \rightarrow \widetilde{H}_{k-1}(Lk_{K}(v)) \rightarrow \cdots,
$$
where $K-v = \{\sigma-\{v\} \mid \sigma \in K\}$ and $k$ is a positive integer.
We note that $\widetilde{H}_{k}(St_{K}(v)) = 0$ for $k \geq 0$ since $St_{K}(v)$ is a topological cone.
Therefore, for $k \geq 0$, if $\widetilde{H}_{k}(Lk_{K}(v))$ is trivial, then $\widetilde{H}_{k}(K-v) \cong \widetilde{H}_{k}(K)$ as groups.
In this case, we call $v$ a \emph{removable vertex} of $K$.

Let us consider the canonical action of the Weyl group $W_R$ on the vertex set $V_R$ of $K_R$.
It is known that there are exactly $n$ vertex orbits $V_1, \ldots, V_n$ of $K_R$, where $n$ is the number of simple roots of $W_R$.

\begin{theorem}\label{thm}
    For a subcomplex $L$ of $K_R$, the simplicial complex obtained by the below algorithm has the same homology group as $L$.
\end{theorem}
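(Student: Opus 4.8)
The plan is to reduce everything to the single-vertex removal principle that the Mayer--Vietoris sequence above already supplies, and then iterate it along the deletions carried out by the algorithm. Recall the key consequence drawn just before the theorem: if $v$ is a removable vertex of a complex $K$, so that $\widetilde{H}_{k}(\Lk_{K}(v))=0$ for all $k\ge 0$, then, since $\St_{K}(v)$ is a cone and hence acyclic, the sequence forces $\widetilde{H}_{k}(K-v)\cong\widetilde{H}_{k}(K)$ for every $k$. This isomorphism is exactly the invariant that the algorithm is designed to preserve at each step.

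First I would make the iteration explicit. Let $L=L_{0},L_{1},\dots,L_{N}$ be the successive complexes produced by the algorithm, with $L_{i+1}=L_{i}-v_{i}$, where $v_{i}$ is the vertex deleted at step $i$ and $L_{N}$ is the final output. The assertion to prove is then the chain $\widetilde{H}_{*}(L)=\widetilde{H}_{*}(L_{0})\cong\widetilde{H}_{*}(L_{1})\cong\cdots\cong\widetilde{H}_{*}(L_{N})$, each link of which is an instance of the single-vertex principle. The one point that must be checked is that $v_{i}$ is removable \emph{in the current complex} $L_{i}$, and not merely in the original $L$: earlier deletions can both create and destroy removability, so it is $\Lk_{L_{i}}(v_{i})$, rather than $\Lk_{L}(v_{i})$, whose reduced homology must vanish. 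Granting this, a routine induction on $i$ gives $\widetilde{H}_{*}(L)\cong\widetilde{H}_{*}(L_{N})$, and termination is automatic since each step strictly shrinks the finite vertex set.

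The step I expect to be the main obstacle is precisely this verification once the procedure is accelerated using the $n$ vertex orbits $V_{1},\dots,V_{n}$ of $K_{R}$. The temptation is to certify removability on a single orbit representative and then delete its whole orbit at once; but the $W_{R}$-equivariance of links, $\Lk_{K_{R}}(gv)=g\cdot\Lk_{K_{R}}(v)$, does \emph{not} descend to the subcomplex $L$, because $L$ need not be $W_{R}$-invariant. Consequently $\Lk_{L}(gv)$ and $g\cdot\Lk_{L}(v)$ may genuinely differ, and removability of $gv$ cannot be inferred from that of $v$. I would therefore restrict any symmetry shortcut to the situations where it is actually valid and, for a batch of vertices scheduled within a single pass, argue either that no edge of the current complex joins two of them --- so that their deletions commute and none alters another's link --- or else order them and re-examine each link after the preceding deletions have been performed. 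Pinning down that the algorithm's bookkeeping really does guarantee $\widetilde{H}_{*}(\Lk_{L_{i}}(v_{i}))=0$ at every deletion is where the substance of the argument lies; once that is secured, the homology invariance follows formally from the Mayer--Vietoris isomorphism together with the induction above.
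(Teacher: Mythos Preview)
Your framework is the same as the paper's --- single-vertex Mayer--Vietoris removal, iterated --- and you correctly isolate the only nontrivial point: when the algorithm deletes a batch $W\subset V_i$ all at once, one must know that each $w\in W$ is still removable \emph{after} the other vertices of $W$ have been deleted, not merely in the complex $K$ where removability was tested. You mention the right mechanism (``no edge of the current complex joins two of them'') but leave it as a hypothesis to be verified, and you hedge with an alternative (re-examine links after each deletion) that the algorithm does \emph{not} perform.

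The paper closes this gap with a structural fact about Coxeter complexes that you are missing: by Proposition~8.29 of Hall, every facet of $K_R$ meets each orbit $V_i$ in exactly one vertex, so any two vertices in the same orbit are non-adjacent in $K_R$ and hence in every subcomplex. Thus for $v,w\in V_i$ one has $v\notin\Lk_K(w)$, whence $\Lk_{K-v}(w)=\Lk_K(w)$; removability within an orbit is therefore unaffected by deletions within that same orbit, and the batch $K\mapsto K-W$ is legitimately a sequence of homology-preserving single deletions. This is not a case-by-case check but a uniform consequence of the Coxeter structure, and it is precisely what justifies line~9 of the algorithm.

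A minor point: your paragraph about $W_R$-equivariance of links and the failure of symmetry shortcuts is a red herring. The algorithm never infers removability of $gv$ from that of $v$; it tests every vertex of $V_i$ individually. The orbit grouping is used solely to guarantee pairwise non-adjacency within each batch, not to transport removability via the group action.
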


\begin{algorithm}[H]
\caption{~}
	\begin{algorithmic}[1]
		\State $K \leftarrow L$
		\For {$i = 1, \ldots, n$}
        \State $W \leftarrow \emptyset$
		\For {each $v \in V_i$}
        \If {$v$ is removable in $K$}
        \State {$W \leftarrow  W \cup \{ v\}$}
		\EndIf
		\EndFor
		\State {$K \leftarrow K-W := \{\sigma-W \mid \sigma \in K\}$}
		\EndFor
        \State {Return $K$}
	\end{algorithmic}
\end{algorithm}

\begin{proof}
By Proposition~8.29 in \cite{Hall_book}, for each facet $\cC$ of $K_R$, every vertex orbit of $K_R$ contains exactly one vertex of $\cC$.
That is, for any $v, w \in V_i$, $v$ and $w$ are not adjacent.
Then, for any subcomplex $K$ of $K_R$ and $v, w \in V_i$, $v$ is not contained in $Lk_{K}(w)$.

Note that for removable vertices $v$ and $w$ of $K$, $w$ is still removable in $K-v$ if $w$ is not in the link of $v$ in $K$, whereas there is no guarantee that $w$ is removable in $K-v$ in general.
Thus, we can remove all removable vertices of $K$ in $V_i$ from $K$ at once without changing their homology groups.
We do this procedure inductively for every vertex orbit to obtain $K$, and it is obvious that $H_\ast(K) \cong H_\ast(L)$ as groups.
\end{proof}

Notice that the order of vertex orbits does not matter.
In this paper, we fix the order by size of orbit, with $\vert V_{i} \vert < \vert V_{i+1} \vert$.
Let $\widehat{K}_S$ be the complex resulting from $K_S$ as obtained by the algorithm in Theorem~\ref{thm}.
Then the sizes of $\widehat{K}_S$ obtained as in Table~\ref{number of vtx} are dramatically smaller than the sizes of $K_S$.

\begin{table}[H]
	\renewcommand{\arraystretch}{1.3}
	\centering
	\begin{tabular}{c|c|c|c}
		\hline
		$E_7$ & $S = S_1$ & $S = S_2$ & $S = S_3$ \\
		\hline
		$K_S$ &$9{,}176$&$8{,}672$&$4{,}664$\\
		\hline
		$\widehat{K}_S$  & $408$ & $928$ & $4{,}664$  \\
		\hline
	\end{tabular}
	\hfil
	\renewcommand{\arraystretch}{1.3}
	\centering
	\begin{tabular}{c|c|c}
		\hline
		$E_8$ & $S = S_4$ & $S = S_5$ \\
		\hline
		$K_S$ &$432{,}944$&$451{,}200$\\
		\hline
		$\widehat{K}_S$  & $9{,}328$ & $15{,}488$ \\
		\hline
	\end{tabular}
	\caption{Numbers of vertices of $K_S$ and $\widehat{K}_S$}
	\label{number of vtx}
\end{table}

The following proposition establishes some properties of $K_S$ and $\widehat{K}_S$.
\begin{proposition}\label{pro3} \noindent
	\begin{enumerate}
        \item $K_{S_1}$ and $K_{S_4}$ have two connected components; the other $K_S$ are connected.
		\item For $S = S_1,S_4$, two components of $K_S$ are isomorphic.
		\item All $\widehat{K}_S$ are pure simplicial complexes.
		\item Each component of $\widehat{K}_{S_1}$ is isomorphic to some induced subcomplex of $K_{D_6}$.
		\item Each component of $\widehat{K}_{S_4}$ is isomorphic to $\widehat{K}_{S_3}$.
	\end{enumerate}
\end{proposition}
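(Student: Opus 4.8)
The plan is to treat the five assertions in two groups: the combinatorial facts (1)--(3), which I would settle by direct inspection of the explicitly constructed complexes, and the structural identifications (4)--(5), which I would derive from the chain of root-subsystem inclusions $D_6 \subset E_7 \subset E_8$ together with a symmetry argument. Throughout I would work with the complexes produced by the decomposition of Section~\ref{sec:Real toric varieties associated to the Weyl chambers} (Lemmas~\ref{cosetdecom} and~\ref{criterion}) and then reduced via the algorithm of Theorem~\ref{thm}. Since that algorithm preserves homology, in particular $\widetilde{H}_0$, the number of connected components of $\widehat{K}_S$ equals that of $K_S$, so most checks can be carried out on the much smaller complexes $\widehat{K}_S$ rather than on $K_S$ itself.

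For (1) I would compute $\dim_\Q \widetilde{H}_0(\widehat{K}_S)$ for each representative; the number of components of $K_S$ is then $1+\dim_\Q\widetilde{H}_0$, yielding two for $S_1,S_4$ and one otherwise. For (3) I would list the facets of each $\widehat{K}_S$ and check that they share a common dimension; as the five complexes are small, this is a finite verification. For (2) I would exploit the longest element $w_0 = -\mathrm{id}\in W_R$, which exists for both $E_7$ and $E_8$. Because $-v \equiv v \pmod 2$, the map $w_0$ acts trivially on $Row(\Lambda_R)$, hence fixes every $S$ setwise and induces a simplicial automorphism of $K_S$; moreover no Weyl chamber, being a pointed cone, contains both a ray and its negative, so $v$ and $-v$ are never adjacent. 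For $S=S_1,S_4$ it then remains only to verify that $w_0$ interchanges the two components rather than preserving each, whereupon the components are isomorphic via $w_0$. Should $w_0$ instead fix both components setwise, I would search the stabilizer $\mathrm{Stab}_{W_R}(S)$ for an element acting transitively on the two components.

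For (4) and (5) the guiding principle is that deleting an end node of the Dynkin diagram realizes $W_{D_6}$ and $W_{E_7}$ as parabolic subgroups of $W_{E_7}$ and $W_{E_8}$, respectively, and that the associated Coxeter complexes appear as links or slices inside the larger ones. The vertex counts are already consistent with this: by (2) each component of $\widehat{K}_{S_1}$ has $408/2 = 204$ vertices, which I expect to match an induced subcomplex of $K_{D_6}$, while each component of $\widehat{K}_{S_4}$ has $9328/2 = 4664$ vertices, exactly the vertex count of $\widehat{K}_{S_3}$. Concretely I would transport the coweight data of $D_6$ (resp.\ $E_7$) into that of $E_7$ (resp.\ $E_8$) along the subsystem embedding, use this to write down a candidate bijection from the vertex set of one component to the vertex set of the target complex, and then verify that it carries facets to facets bijectively, so that it is a simplicial isomorphism of the induced subcomplexes.

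I expect the main obstacle to be precisely (4) and (5): producing the vertex correspondence is routine, but proving that it is a genuine \emph{simplicial} isomorphism is delicate, because the reduction algorithm of Theorem~\ref{thm} deletes different removable vertices on the two sides (and the orbit ordering it uses need not respect the embedding), so the surviving face structures must be compared after reduction rather than before. A secondary point requiring care is confirming, in (2), that the chosen symmetry genuinely swaps the two components; this is a finite check, but it must be carried out rather than assumed.
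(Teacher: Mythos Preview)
Your proposal is sound in outline and would succeed, but it is considerably more elaborate than what the paper actually does. The paper offers no mathematical argument for Proposition~\ref{pro3} at all: the entire proposition is verified by direct computer check, with the code made publicly available. In particular, assertions (4) and (5) are established by explicitly constructing the complexes on both sides and testing them for simplicial isomorphism, not by invoking the parabolic embeddings $D_6\subset E_7\subset E_8$ that you propose. Your route via root-subsystem inclusions is more conceptual and would explain \emph{why} the isomorphisms hold, but---as you yourself flag---it forces you to track the reduction algorithm of Theorem~\ref{thm} across the embedding, which is genuinely delicate; the paper sidesteps this entirely by comparing the outputs directly.

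One specific point where your plan diverges from the paper: for (2) you reach first for the longest element $w_0=-\mathrm{id}$, whereas the paper's Remark after Proposition~\ref{pro3} records that the isomorphism between the two components is realised by a single simple reflection ($s_{\alpha_3}$ for $E_7$, $s_{\alpha_2}$ for $E_8$). Your $w_0$ argument is correct as far as it goes---$-\mathrm{id}$ does fix $S$ setwise since $-v\equiv v\pmod 2$---but whether it actually swaps the two components is not something you can decide a priori, and you rightly note you would have to check it. Your fallback of searching $\mathrm{Stab}_{W_R}(S)$ is exactly what the paper's computation in effect does, landing on a simple reflection. So your approach is compatible with the paper's, just with an extra first guess that may or may not pan out.
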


The above proposition was checked by computer program.
The Python codes used for checking are released at \url{https://github.com/Seonghyeon-Yu/E7-and-E8}.

In conclusion, by Proposition~\ref{pro3}, for our purposes we only need to compute the Betti numbers of $K_{S}$ for $S = S_2, S_3$, and $S_5$, since the Betti numbers of $K_S$ of $K_{D_6}$ are already computed in \cite{Choi-Kaji-Park2019} for all $S\in Row(\Lambda_{D_6})$.

\begin{remark} \noindent
\begin{center}
    {\Large \dynkinName[Coxeter]E7\;\;\;}
    \dynkin[labels={\alpha_1,\alpha_...,\alpha_7}, text style/.style={scale=1}, scale = 0.08cm]E7 \;\;\;\;\;\;\;\\
    {\Large \dynkinName[Coxeter]E8\;\;\;}
    \dynkin[labels={\alpha_1,\alpha_...,\alpha_8}, text style/.style={scale=1}, scale = 0.08cm]E8\\
\end{center}
    \begin{itemize}
    \setlength\itemsep{.5em}
      \item[(1)] Each isomorphism in Proposition \ref{pro3} $(2)$ can be represented as one of simple roots; see the above Dynkin diagrams. For the type $E_7$, the simple root $\alpha_3$ represents the isomorphism between the components of $\widehat{K}_{S_1}$; for the type $E_8$, the simple root $\alpha_2$ represents the isomorphism between the components of $\widehat{K}_{S_4}$.
      \item[(2)] Denote by $\bar{K}_S$ a connected component of $\widehat{K}_S$.
      Here are the $f$-vectors $f(\bar{K}_{S})$ of $\bar{K}_S$:
          \begin{align*}
            f(\bar{K}_{S_1}) &=  (204, 1312, 1920)& f(\bar{K}_{S_4}) &= (4664,36288,60480) \\
            f(\bar{K}_{S_2}) &= (928,6848,15360,11520) & f(\bar{K}_{S_5}) &= (15488,193536,645120) \\
            f(\bar{K}_{S_3}) &= (4664,36288,60480) &
          \end{align*}
      As seen, the $f$-vectors of $\bar{K}_{S_3}$ and $\bar{K}_{S_4}$ are the same because of Proposition~\ref{pro3} $(5)$.
      From the $f$-vectors, we can compute the Euler characteristic of $K_S$.
    \end{itemize}
\end{remark}

\section{Computation of the Betti numbers}
In this section, we shall use a computer program \emph{SageMath 9.3} \cite{sage93}, to compute the Betti numbers of the given simplicial complexes.
From Proposition~\ref{pro3}, we already know the Betti numbers of $\widehat{K}_{S_1}$.
For $S_2$ and $S_3$, we can compute the Betti numbers of $\widehat{K}_S$ within reasonable time; see Table \ref{E7_Ks}.

\begin{table}[H]
	\renewcommand{\arraystretch}{1.3}
	\centering
	\begin{tabular}{c|c|c|c}
		\hline
		$\widetilde{\beta}_{k}(K_S)$ & $S = S_1$ & $S = S_2$ & $S = S_3$ \\
		\hline
		$k = 0$  & $1$ & $0$ & $0$  \\
		\hline
		$k = 1$  & $0$ & $129$ & $0$ \\
		\hline
		$k = 2$ & $1{,}622$ & $0$ & $28{,}855$ \\
		\hline
		$k = 3$ & $0$ & $1{,}952$ & $0$ \\
		\hline
		\hline
		$\#$ orbit &  $63$ & $63$ & $1$ \\
		\hline
	\end{tabular}
	\caption{Nonzero reduced Betti numbers of $K_S$ for $S$ in Row($\Lambda_{E_7}$)}
	\label{E7_Ks}
\end{table}

From Table~\ref{E7_Ks}, we can immediately conclude the following theorem.

\begin{theorem}
The $k$th Betti numbers $\beta_{k}$ of $X^{\R}_{E_7}$ are as follows:
    $$\beta_{k}(X^{\R}_{E_7})= \begin{cases}
1, & \mbox{if} \ k=0\\
63, & \mbox{if} \ k=1\\
8{,}127, & \mbox{if} \ k=2\\
131{,}041, & \mbox{if} \ k=3\\
122{,}976, & \mbox{if} \ k=4\\
0, & \mbox{otherwise.}
\end{cases} $$
\end{theorem}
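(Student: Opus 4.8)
The plan is to read off the Betti numbers directly from the decomposition of Theorem~\ref{geomain}, the orbit count of Proposition~\ref{E7E8orbit}, and the reduced Betti numbers tabulated in Table~\ref{E7_Ks}; no new geometric input is required beyond these. By Theorem~\ref{geomain}, the $k$th Betti number of $X^{\R}_{E_7}$ is
$$\beta_k(X^{\R}_{E_7}) = \sum_{S \in Row(\Lambda_{E_7})} \widetilde{\beta}_{k-1}(K_S),$$
so the whole computation reduces to summing reduced Betti numbers of the induced subcomplexes $K_S$ over the finite row space $Row(\Lambda_{E_7})$, which has $2^7 = 128$ elements ($127$ nonzero together with the zero element).

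First I would dispose of the zero element $S = 0 \in Row(\Lambda_{E_7})$. In this case the induced subcomplex $K_S$ is empty, so its only nonvanishing reduced homology is $\widetilde{H}_{-1}(\emptyset) \cong \Q$; this contributes exactly $1$ to $\beta_0$ and nothing to any higher degree. Since each of the $127$ nonzero $S$ yields a nonempty $K_S$, these contribute nothing in degree $k = 0$ via $\widetilde{H}_{-1}$, and we obtain $\beta_0 = 1$.

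For the nonzero part I would invoke the $W_{E_7}$-equivariance built into Theorem~\ref{geomain}: because $K_S \cong K_{gS}$ for every $g \in W_{E_7}$, the reduced Betti numbers $\widetilde{\beta}_{k-1}(K_S)$ are constant on $W_{E_7}$-orbits. By Proposition~\ref{E7E8orbit} the $127$ nonzero elements split into three orbits with representatives $S_1, S_2, S_3$ of sizes $63$, $63$, and $1$. Hence for $k \geq 1$,
$$\beta_k(X^{\R}_{E_7}) = 63\,\widetilde{\beta}_{k-1}(K_{S_1}) + 63\,\widetilde{\beta}_{k-1}(K_{S_2}) + \widetilde{\beta}_{k-1}(K_{S_3}).$$
Substituting the values of Table~\ref{E7_Ks} gives $\beta_1 = 63\cdot 1 = 63$, $\beta_2 = 63\cdot 129 = 8{,}127$, $\beta_3 = 63\cdot 1{,}622 + 28{,}855 = 131{,}041$, and $\beta_4 = 63\cdot 1{,}952 = 122{,}976$, while $\beta_k = 0$ for $k \geq 5$ since all the relevant reduced Betti numbers vanish there.

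Finally, I should stress where the difficulty actually lies. This last step is pure bookkeeping, a single weighted sum over three orbit representatives, so there is no genuine obstacle here. The real work is entirely upstream: reducing each enormous $K_{S_i}$ to the tractable $\widehat{K}_{S_i}$ by the removable-vertex algorithm of Theorem~\ref{thm} (so that \emph{SageMath} can compute $\widetilde{H}_\ast$), and establishing the orbit structure of Proposition~\ref{E7E8orbit} that collapses $127$ summands into three. Once those facts are granted, the present theorem follows immediately.
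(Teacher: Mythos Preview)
Your proof is correct and follows precisely the approach of the paper, which simply states that the theorem follows immediately from Table~\ref{E7_Ks} together with Theorem~\ref{geomain} and the orbit counts of Proposition~\ref{E7E8orbit}. You have merely written out the weighted sum and the $S=0$ contribution explicitly, which the paper leaves to the reader.
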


By Proposition~\ref{pro3} and the above result, we now have the Betti numbers of $\widehat{K}_{S_{4}}$.
For any vertex $v$ of $\widehat{K}_{S_{5}}$, we have $\widetilde{H}_{0}(Lk_{\widehat{K}_{S_{5}}}(v)) = \widetilde{H}_{1}(Lk_{\widehat{K}_{S_{5}}}(v)) = 0$ by computation.
Hence we have the Mayer-Vietoris sequence
$$
0 = \widetilde{H}_{1}(Lk_{\widehat{K}_{S_{5}}}(v)) \rightarrow \widetilde{H}_{1}(\widehat{K}_{S_{5}}-v)\oplus \widetilde{H}_{1}(St_{\widehat{K}_{S_{5}}}(v)) \rightarrow \widetilde{H}_{1}(\widehat{K}_{S_{5}}) \rightarrow \widetilde{H}_{0}(Lk_{\widehat{K}_{S_{5}}}(v)) = 0.
$$
Since $\widetilde{H}_{1}(St_{\widehat{K}_{S_{5}}}(v))$ is trivial, $\widetilde{H}_{1}(\widehat{K}_{S_{5}}-v)$ is isomorphic to $\widetilde{H}_{1}(\widehat{K}_{S_{5}})$.
For the largest vertex orbit $V$ of $\widehat{K}_{S_{5}}$, by the same proof argument as for Theorem~\ref{thm}, $\widetilde{H}_{1}(\widehat{K}_{S_{5}}-{V})$ is isomorphic to $\widetilde{H}_{1}(\widehat{K}_{S_{5}})$.
Note that the size of $\widehat{K}_{S_{5}}-{V}$ is much smaller than $\widehat{K}_{S_{5}}$.
Thus, $\widetilde\beta_{1}(K_{S_{5}})$ can be computed within reasonable time from $\widehat{K}_{S_{5}}-V$ instead of $\widehat{K}_{S_{5}}$.
However, there is no vertex of $\widehat{K}_{S_{5}}$ such that $\widetilde{H}_{2}(Lk_{\widehat{K}_{S_{5}}}(v)) = 0$.
Thus, for $k=2,3$ we must compute $\widetilde\beta_{k}(\widehat{K}_{S_{5}})$ directly, which takes a few days of run time.
See Table \ref{E8_Ks} for the results.

\begin{table}[h!]
	\renewcommand{\arraystretch}{1.3}
	\centering
	\begin{tabular}{c|c|c}
		\hline
		$\widetilde{\beta}_{k}(K_S)$ & $S = S_1$ & $S = S_2$ \\
		\hline
		$k = 0$  & $1$ & $0$ \\
		\hline
		$k = 1$  & $0$ & $769$ \\
		\hline
		$k = 2$ & $57{,}710$ & $0$ \\
		\hline
		$k = 3$ & $0$ & $177{,}280$ \\
		\hline
        \hline
        $\#$ orbit & 120 & 135 \\
        \hline
	\end{tabular}
	\caption{Nonzero reduced Betti numbers of $K_S$ for $S$ in Row($\Lambda_{E_8}$)}
	\label{E8_Ks}
\end{table}

Table~\ref{E8_Ks} implies the following theorem.

\begin{theorem}
	The $k$th Betti numbers $\beta_{k}$ of $X^{\R}_{E_8}$ are as follows:
	$$\beta_{k}(X^{\R}_{E_8})= \begin{cases}
		1, & \mbox{if} \ k=0\\
		120, & \mbox{if} \ k=1\\
		103{,}815, & \mbox{if} \ k=2\\
		6{,}925{,}200, & \mbox{if} \ k=3\\
		23{,}932{,}800, & \mbox{if} \ k=4\\
		0, & \mbox{otherwise.}
	\end{cases} $$
\end{theorem}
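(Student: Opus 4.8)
The plan is to obtain the stated Betti numbers as a direct bookkeeping consequence of Theorem~\ref{geomain}, the orbit count of Proposition~\ref{E7E8orbit}, and the reduced Betti numbers assembled in Table~\ref{E8_Ks}; all the genuine computational effort has already been expended in forming the complexes $\widehat{K}_{S}$ and computing their homology in the previous sections. Concretely, Theorem~\ref{geomain} gives (over $\Q$) the isomorphism $H_{\ast}(X_{E_8}^{\R}) \cong \bigoplus_{S\in Row(\Lambda_{E_8})} \widetilde{H}_{\ast-1}(K_{S})$, so that $\beta_{k}(X^{\R}_{E_8}) = \sum_{S \in Row(\Lambda_{E_8})} \widetilde{\beta}_{k-1}(K_{S})$. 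First I would isolate the zero element of the $\Z_2$-vector space $Row(\Lambda_{E_8})$: as a subset of $V$ it is empty, so the corresponding induced subcomplex is the irrelevant complex $\{\emptyset\}$, whose unique nonzero reduced homology is $\widetilde{H}_{-1} \cong \Q$. This summand contributes exactly $1$ to $\beta_{0}$ and nothing to higher degrees, while every nonzero $S$ yields a nonempty $K_{S}$ with $\widetilde{H}_{-1}(K_{S}) = 0$; hence $\beta_{0} = 1$.

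Next I would group the $255$ nonzero elements into the two Weyl orbits supplied by Proposition~\ref{E7E8orbit}. Because the isomorphism of Theorem~\ref{geomain} is one of $W_{E_8}$-modules, we have $K_{S} \cong K_{gS}$ for all $g \in W_{E_8}$, so all elements of a fixed orbit carry identical reduced homology. Consequently each orbit of size $N$ with representative $S_i$ contributes $N \cdot \widetilde{\beta}_{k-1}(K_{S_i})$, and with the orbit sizes $120$ (for $S_4$) and $135$ (for $S_5$) we arrive at the master formula
$$\beta_{k}(X^{\R}_{E_8}) = 120\,\widetilde{\beta}_{k-1}(K_{S_4}) + 135\,\widetilde{\beta}_{k-1}(K_{S_5}) \qquad (k \geq 1),$$
together with $\beta_{0} = 1$ from the previous paragraph.

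Finally I would read off the entries of Table~\ref{E8_Ks} (whose two columns record $K_{S_4}$ and $K_{S_5}$, of orbit sizes $120$ and $135$): the only nonvanishing reduced Betti numbers are $\widetilde{\beta}_{0}(K_{S_4}) = 1$ and $\widetilde{\beta}_{2}(K_{S_4}) = 57{,}710$ for the first orbit, and $\widetilde{\beta}_{1}(K_{S_5}) = 769$ and $\widetilde{\beta}_{3}(K_{S_5}) = 177{,}280$ for the second. Substituting into the master formula and applying the degree shift by one gives $\beta_{1} = 120$, $\beta_{2} = 135 \cdot 769 = 103{,}815$, $\beta_{3} = 120 \cdot 57{,}710 = 6{,}925{,}200$, and $\beta_{4} = 135 \cdot 177{,}280 = 23{,}932{,}800$, with all remaining Betti numbers zero, as claimed. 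There is no substantive obstacle left in this final step; the only points demanding care are the degree shift $\widetilde{H}_{\ast-1}$ that aligns the reduced homology of the $K_{S}$ with the unreduced homology of $X^{\R}_{E_8}$, and the correct accounting of the empty-complex summand that produces $\beta_{0} = 1$.
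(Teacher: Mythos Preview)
Your argument is correct and is precisely the bookkeeping the paper performs: the paper simply states that Table~\ref{E8_Ks} implies the theorem, and your proposal spells out in detail the same derivation via Theorem~\ref{geomain}, Proposition~\ref{E7E8orbit}, and the degree shift, including the $S=0$ summand giving $\beta_0=1$. The arithmetic checks out in every degree.
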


The Euler characteristic number $\chi(X)$ of a topological space $X$ is equal to the alternating sum of the Betti numbers $\beta_{k}(X)$ of $X$.
We can use this fact as a confidence check for our results.
\begin{remark}
    It is well known that the Euler characteristic numbers $\chi(X_{E_7}^{\R}) $ and $\chi(X_{E_8}^{\R})$ are 0 and 17,111,296, respectively. Obviously, the alternating sums of the Betti numbers based on our results match $\chi(X_{E_7}^{\R})$ and $\chi(X_{E_8}^{\R})$.
\end{remark}

\bibliographystyle{plain}

\end{document}